\renewcommand{\L}{\mathcal{L}}
\newcommand{\C}{\mathbb{C}}
\newcommand{\g}{\mathfrak{g}}
\newcommand{\n}{\mathfrak{n}}
\newcommand{\h}{\mathfrak{h}}
\renewcommand{\sl}{\mathfrak{sl}}
\renewcommand{\deg}{\operatorname{deg}}
\renewcommand{\O}{\mathcal{O}}
\newcommand {\Der} {\operatorname{Der}}
\newtheorem{thm}{Theorem}
\newtheorem{theorem}{Theorem}[section]
\newtheorem{proposition}[theorem]{Proposition}
\theoremstyle{definition}
\newtheorem{example}[theorem]{Example}
\theoremstyle{remark}
\numberwithin{equation}{section}
\begin{document}

\title[The Grunewald-O'Halloran conjecture]{The Grunewald-O'Halloran conjecture \\ for nilpotent Lie algebras of rank $\ge 1$}
\author{Joan Felipe Herrera-Granada and  Paulo Tirao}
\address{CIEM-FaMAF, Universidad Nacional de C\'ordoba, Argentina}
\date{October 31, 2013}
\subjclass[2010]{Primary 17B30; Secondary 17B99}
\keywords{Nilpotent Lie algebras, Vergne's conjecture, Grunewald-O'Halloran conjecture, degenerations, deformations.}

\maketitle

\begin{abstract}
Grunewald and O'Halloran conjectured in 1993 that every complex nilpotent Lie algebra is the degeneration of another, 
non isomorphic, Lie algebra.
We prove the conjecture for the class of nilpotent Lie algebras admitting a semisimple derivation,
remaining open for the class of characteristically nilpotent Lie algebras.
In dimension 7, where the first characteristically nilpotent Lie algebras appear, we prove the conjecture 
and we also exhibit explicit nontrivial degenerations to every 7-dimensional nilpotent Lie algebra.
\end{abstract}

\section{Introduction}

The study of the algebraic varieties of Lie algebras, solvable, and nilpotent Lie algebras of dimension $n$ 
turned out to be a very hard subject.
The theory of deformations of algebras started with a series of papers by Gerstenhaber,
the first being \cite{G}.
Since then a lot of efforts has been done (see for instance \cite{NR1,R,NR2,V,C1,K}), 
however many natural questions remain unsolved.
For example, the determination of the irreducible components of the variety of nilpotent Lie algebras
seems today out of reach.

Among the open questions there are two conjectures about nilpotent Lie algebras.
One, due to Grunewald and O'Halloran \cite{GO2}, states that every 
complex nilpotent Lie algebra is the degeneration of another, non isomorphic, Lie algebra.
The other one, known as Vergne's conjecture, states that there are no rigid complex nilpotent Lie algebras 
in the algebraic variety $\L_n$ of complex Lie algebras of dimension $n$.
Meaning that there are no nilpotent Lie algebras with open orbit in $\L_n$, that is such that their isomorphisms classes are open in $\L_n$.
The first conjecture is a priori stronger than the second one.
In this short paper we address the Grunewald-O'Halloran conjecture.

It is well known that, over fields of characteristic zero, geometric rigidity is equivalent to formal rigidity,
the latest meaning that all formal deformations are trivial \cite{GS}.
However, this does not imply that the Grunewald-O'Halloran conjecture and Vergne's conjecture are equivalent.
If so, it would also imply that every non geometrically rigid Lie algebra is the degeneration of another non isomorphic Lie algebra,
which is not true already in dimension $n=3$.
In fact the only complex rigid Lie algebra of dimension 3 is the simple Lie algebra $\sl_2(\C)$ and, for instance, the solvable (non nilpotent) Lie algebra
$\mathfrak{r}+\C$, where $\mathfrak{r}$ is the 2-dimensional solvable Lie algebra, is on top of the Hasse diagram of degenerations,
and in particular it is not the degeneration of any other Lie algebra (see \cite{CD} and \cite{BSt}).

Complex Lie algebras and nilpotent Lie algebras of small dimension are classified and
in this cases all the degenerations among them and also which are rigid is known.
All degenerations that occur among complex Lie algebras of dimension $\le 4$ are given in \cite{St} and \cite{BSt}.
In \cite{GO1} and \cite{Se} all degenerations for complex nilpotent Lie algebras of dimension 5 and 6 are given
and more recently, in \cite{B}, some degenerations for some 5-step and 6-step complex nilpotent Lie algebras of dimension 7 are given.
Results on the different varieties and on rigidity in low dimensions may be found in \cite{CD,C2}.
In \cite{AG} and \cite{AGGV} the components of the varieties of nilpotent Lie algebras of dimension 7 and 8 are given.

Carles \cite{C1} investigated the structure of rigid Lie algebras over algebraically closed fields of characteristic zero.
In particular he proved that nilpotent Lie algebras of rank $\ge 1$ are never rigid and moreover
nilpotent Lie algebras with a codimension 1 ideal of rank $\ge 1$ are also never rigid.
That is, Vergne's conjecture holds for this class, remaining open for characteristically nilpotent Lie algebras
for which all its ideals of codimension 1 are also characteristically nilpotent.

In the paper \cite{GO2}, the authors constructed nontrivial linear deformations for large classes of nilpotent Lie algebras and left open 
the question of which of those deformations correspond to degenerations.
Their construction of linear deformations of a given Lie algebra $\g$,
relies on the existence of a codimension 1 ideal $\h$ of $\g$ with a semisimple derivation $D\in \Der(\h)$,
and applies not only to nilpotent Lie algebras.
In general, the deformations constructed do not correspond to a degeneration.
A fixed ideal $\h$ may produce many non equivalent deformations, some of which may correspond to a degeneration
and some may not. 

We prove two things.
On the one hand we prove that the Grunewald-O'Halloran conjecture holds for nilpotent Lie algebras of rank $\ge 1$,
leaving it open for characteristically nilpotent Lie algebras.
On the other hand, we prove that the conjecture holds for 7-dimensional nilpotent Lie algebras and moreover
and interesting for us we exhibit explicit degenerations to each 7-dimensional nilpotent Lie algebra.

More precisely, we show that if the semisimple derivation $D$ of $\h$ is the restriction to $\h$ of a semisimple derivation of $\g$,
then the associated deformation does correspond to a degeneration.
Then we are able to prove the following.

\begin{thm}\label{thm:1}
If $\n$ is a complex nilpotent Lie algebra with a nontrivial semisimple derivation, then $\n$ is the degeneration of another,
non isomorphic, Lie algebra. 
\end{thm}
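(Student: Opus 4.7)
The plan is to apply the Grunewald-O'Halloran construction to $\n$ itself, with a codimension $1$ ideal $\h$ chosen so that the given semisimple derivation $D$ of $\n$ restricts to a semisimple derivation of $\h$. The extension criterion announced in the paragraph preceding the theorem will then automatically apply and upgrade the resulting linear deformation into a genuine degeneration.

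I would construct the ideal as follows. Since $D\in\Der(\n)$ preserves $[\n,\n]$ and is semisimple, $\n$ splits as a direct sum of $D$-weight spaces compatibly with the commutator ideal; because $\n$ is a nonzero nilpotent Lie algebra, $\n/[\n,\n]\neq 0$, so I can pick a $D$-eigenvector $v\in\n\setminus[\n,\n]$ and a $D$-invariant hyperplane $\h\supset[\n,\n]$ with $\n=\h\oplus\C v$. Then $\h$ is a codimension $1$ ideal and $D_0:=D|_\h$ is a semisimple derivation of $\h$; one checks that $v$ can be chosen so that $D_0\neq 0$, since $D\neq 0$ on $\n$ and acts by a scalar on $\C v$. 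The Grunewald-O'Halloran deformation, with bracket
\[
  \mu_t|_{\h\times\h}=[\,\cdot\,,\,\cdot\,]|_{\h\times\h},\qquad
  \mu_t(v,h)=[v,h]+tD_0(h),
\]
then defines a $1$-parameter family of Lie algebras $\n_t$ with $\n_0\cong\n$ (Jacobi for $\mu_t$ follows from the fact that $\mathrm{ad}(v)|_\h+tD_0$ is a derivation of $\h$).

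By construction $D_0$ is the restriction of the semisimple derivation $D$ of the whole $\n$, so the extension criterion announced before the theorem applies, and the linear family $\n_t$ is actually a geometric degeneration: there is a Lie algebra $\mathfrak a$ with $\n_t\cong\mathfrak a$ for every $t\neq 0$ and $\n=\n_0$ lies in the Zariski closure of the orbit of $\mathfrak a$. The hard part — and the step I would expect to be the main obstacle — is to verify that $\mathfrak a\not\cong\n$, i.e.\ that the deformation is nontrivial. I would attempt this by a spectral argument: the operator $\mathrm{ad}_{\n_t}(v)|_\h=\mathrm{ad}(v)|_\h+tD_0$ has characteristic polynomial varying nontrivially in $t$, so its Jordan invariants (generalized eigenspace dimensions, traces of iterates, etc.) generically differ from those of $\mathrm{ad}_\n(v)|_\h$. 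To make this work one must pick $v$ with care — ideally of nonzero $D$-weight, falling back to exploiting $D_0|_{[\n,\n]}\neq 0$ in case $D$ acts trivially on $\n/[\n,\n]$ — so that no coincidence of spectra accidentally makes $\n_t$ isomorphic to $\n$. Granting this, $\n$ is a degeneration of $\mathfrak a\not\cong\n$, and the theorem is proved.
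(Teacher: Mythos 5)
Your strategy is essentially the paper's --- choose a $D$-invariant hyperplane $\h\supseteq[\n,\n]$, restrict $D$ to it, and feed the resulting Grunewald--O'Halloran deformation into the restriction criterion --- but there are two genuine gaps. The first concerns the hypotheses of that criterion: Proposition \ref{prop:restriction} requires the ambient semisimple derivation to be nontrivial \emph{on the invariant complement of $\h$}, i.e.\ your eigenvector $v$ must have \emph{nonzero} $D$-eigenvalue (the proof normalizes this eigenvalue to $1$ and builds the $1$-PSG $g_t$ from the resulting grading; it breaks down when the eigenvalue is $0$). You only arrange $D|_\h\neq 0$ and leave the nonzero-weight condition as something ``ideally'' achievable, with a fallback to the case where $D$ acts trivially on $\n/[\n,\n]$. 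The missing observation --- which is the entire content of the paper's proof of Theorem \ref{thm:1} --- is that a $D$-invariant complement $V$ of $[\n,\n]$ generates $\n$ as a Lie algebra, while the kernel of a derivation is a subalgebra; hence $D\neq 0$ forces $D|_V\neq 0$, so $V$ contains an eigenvector $X_1$ with nonzero eigenvalue and one takes $\h=\langle X_2,\dots,X_r\rangle\oplus[\n,\n]$. In particular your fallback case is vacuous, and even if it occurred the criterion you invoke would not apply to it.

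The second gap is the non-isomorphism step, which you single out as the main obstacle and attack with a generic spectral argument. As written this is not a proof: the fact that the characteristic polynomial of $\operatorname{ad}_{\n_t}(v)|_\h$ varies with $t$ does not by itself exclude an isomorphism $\n_t\cong\n$ (an isomorphism need not carry $v$ to $v$), and ``generically differ'' is not a substitute for an argument valid for some specific $t\neq 0$. In the paper this step is immediate rather than hard: for a nontrivial semisimple $D$ the deformed bracket $\mu_t=\mu+t\mu_D$ is solvable but \emph{not nilpotent} for $t\neq 0$ (the eigenvalues of $\operatorname{ad}_{\mu_t}(X)|_\h$ are the $t\lambda_j$, since $\operatorname{ad}_\mu(X)$ shifts the $D$-grading while $tD$ acts diagonally), so it cannot be isomorphic to the nilpotent algebra $\n$. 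With the generation argument supplying the nonzero-weight eigenvector, and with non-isomorphism read off from solvability versus nilpotency, your outline becomes the paper's proof.
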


The first characteristically nilpotent Lie algebras appear in dimension 7.
Hence, by Theorem \ref{thm:1}, the Grunewald-O'Halloran conjecture holds in dimension $<7$.
Complex nilpotent Lie algebras of dimension 7 are classified:
there are infinitely many isomorphism classes and infinitely many of them are characteristically nilpotent.
We shall refer to the classification by Magnin \cite{M}.
We work out this family on a case by case basis, by considering linear deformations constructed after choosing suitable
codimension 1 ideals and particular derivations of them, proving the following result.

\begin{thm}\label{thm:2}
Every complex nilpotent Lie algebra of dimension $\le 7$, is the degeneration of another, non isomorphic, Lie algebra.
\end{thm}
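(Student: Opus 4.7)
The plan is to split by dimension and, within dimension $7$, by rank. For $\dim \n \le 6$ every complex nilpotent Lie algebra has rank $\ge 1$, since the first characteristically nilpotent examples appear only in dimension $7$; hence Theorem~\ref{thm:1} applies immediately and supplies a non-isomorphic Lie algebra degenerating to $\n$. In dimension $7$, the same argument handles every $\n$ that admits a nontrivial semisimple derivation, so the real problem is reduced to the characteristically nilpotent $7$-dimensional Lie algebras.

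For those, the hypothesis of Theorem~\ref{thm:1} fails by definition, so I would fall back to the general Grunewald--O'Halloran construction. Pick a codimension~$1$ ideal $\h \subset \n$ which, although sitting inside a characteristically nilpotent algebra, itself admits a nontrivial semisimple derivation $D \in \Der(\h)$. Writing $\n = \h \oplus \langle x \rangle$, the bracket on $\n$ is encoded by the bracket on $\h$ together with $\operatorname{ad}(x)|_\h \in \Der(\h)$, and replacing $\operatorname{ad}(x)|_\h$ by $\operatorname{ad}(x)|_\h + tD$ produces a linear deformation $\n_t$. What is required beyond Theorem~\ref{thm:1} is to exhibit, case by case, a curve $g_t \in \operatorname{GL}(\n)$ showing that the generic fiber $\n_t$ actually degenerates to $\n$ and not merely deforms to it. I would therefore work through Magnin's classification of characteristically nilpotent $7$-dimensional Lie algebras and, for each isomorphism class, select an explicit triple $(\h, D, g_t)$.

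Non-isomorphism between $\n$ and the generic $\n_t$ is typically the cheap step: for a well-chosen $D$ the algebra $\n_t$ acquires a nontrivial semisimple derivation, so it is not characteristically nilpotent while $\n$ is; if this fails one falls back on discrete invariants such as the lower central series dimensions or $\dim \Der(\cdot)$. The main obstacle is instead the case-by-case construction of $g_t$: different choices of $D \in \Der(\h)$ give inequivalent deformations, only some of which are genuine degenerations, and the rescaling curve has to be produced by hand in each class. The bookkeeping load is amplified by the continuous families on Magnin's list, for which the data $(\h, D, g_t)$ must depend algebraically on the family parameter and the non-isomorphism must be checked at the generic value. Once the triple is in hand for every class, the per-case verification reduces to bounded linear algebra in a seven-dimensional space, and Theorem~\ref{thm:2} follows.
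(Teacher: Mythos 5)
Your plan coincides with the paper's proof: dimensions $\le 6$ and the rank $\ge 1$ algebras in dimension $7$ are dispatched by Theorem~\ref{thm:1}, and the characteristically nilpotent $7$-dimensional algebras (the isolated $\g_{7,0.j}$ and the continuous family $\g_{7,0.4(\lambda)}$ in Magnin's list, all indecomposable since decomposables have rank $\ge 1$) are handled by explicitly choosing a codimension-$1$ ideal $\h$, a semisimple $D\in\Der(\h)$, and a curve $g_t$ with $g_t^{-1}\cdot\mu_1=\mu_t$, exactly as you describe. What you leave unexecuted is precisely the computational heart of the paper, namely the explicit table of triples $(\h,D,g_t)$ for each class; your non-isomorphism check is also consistent with the paper's, which simply notes that $\mu_1$ is solvable but not nilpotent.
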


We note that the variety of complex nilpotent Lie algebras of dimension 7 has two components, each of which is the closure of the orbit
of a family of Lie algebras \cite{AG}.
Being degeneration transitive, to proof Theorem \ref{thm:2} it is enough to find nontrivial degenerations to these two families.
In the case of dimensions $<7$ this argument reduces the proof to finding a nontrivial degeneration to a single algebra.
This is easy to do and for completeness we do it in dimension 6.

In this paper all Lie algebras will be over the complex numbers.

\section{Linear deformations and degenerations}

Let $\L_n$ be the algebraic variety of complex Lie algebras of dimension $n$,
that is the algebraic variety of Lie brackets $\mu$ on $\C^n$ ($\L_n \subseteq \C^{n^3}$).
Given a complex Lie algebra $\g=(\C^n,\mu)$, we shall refer to it indistinctly by $\g$, $(\g,\mu)$ or $\mu$.
The group $GL_n=GL_n(\C)$ acts on $\L_n$ by `change of basis':
\[ g\cdot \mu (x,y)=g(\mu(g^{-1}x,g^{-1}y)), \qquad g\in GL_n. \]
Thus the orbit $\O(\mu)$ of $\mu$ in $\L_n$, is the isomorphism class of $\mu$. 

A Lie algebra $\mu$ is said to degenerate to a Lie algebra $\lambda$, denoted by $\mu \rightarrow_{\deg} \lambda$, if $\lambda\in\overline{\O(\mu)}$,
the Zariski closure of $\O(\mu)$. If $\lambda\not\simeq\mu$, then $\lambda$ is in the boundary of the orbit $\O(\mu)$ but outside it.
Since the Zariski closure of $\O(\mu)$ coincides with its closure in the relative topology of $\C^{n^3}$,
if $g:\C^\times\rightarrow GL_n$, $t\mapsto g_t$, is continuous  
and $\lim_{t\mapsto 0}g_t\cdot \mu=\lambda$, then $\mu\rightarrow_{\deg}\lambda$.
The degeneration $\mu\rightarrow_{\deg}\lambda$ is said to be realized by a 1-PSG, 
if $g_t$ is a 1-parameter subgroup as a morphism of algebraic groups. 
Recall that if $g_t$ is a 1-PSG, then $g_t$ is diagonalizable with eigenvalues $t^{m_i}$ for some integers $m_i$.

A linear deformation of a Lie algebra $\mu$ is, for the aim of this paper, a family $\mu_t$, $t\in \C^\times$,
of Lie algebras such that
\[ \mu_t=\mu + t\phi, \]
where $\phi$ is a skew-symmetric bilinear form on $\C^n$.
It turns out that $\mu_t$ is a linear deformation of $\mu$ if and only if $\phi$ is a Lie algebra bracket which in addition 
is a 2-cocycle of $\mu$.

If a given a linear deformation $\mu_t$ of $\mu$ is such that $\mu_t\in\O(\mu_1)$ for all $t\in\C^\times$, 
then $\mu_1\rightarrow_{\deg}\mu$.
In fact, for each $t\in\C^\times$ there exist $g_t\in GL_n$ such that $g_t^{-1}\cdot \mu_1=\mu_t$,
then $\lim_{t\mapsto 0}g_t^{-1}\cdot \mu_1=\lim_{t\mapsto 0}\mu_t=\mu$.
Hence, in order to show that $\mu_1\rightarrow_{\deg}\mu$, one only needs to prove that 
for each $t\in\C^\times$ there exist $g_t\in GL_n$ such that
\begin{equation}\label{eqn:degeneration}
 \mu_1(g_t(x),g_t(y)))=g_t(\mu_t(x,y)), \quad\text{for all $x,y\in\C^n$}.
\end{equation}

\subsection{Construction of linear deformations}

We recall now the construction of linear deformations in \cite{GO2}.

Let $(\g,\mu)$ be a given Lie algebra of dimension $n$ and let $\h$ be a codimension 1 ideal of $\g$ with a semisimple derivation $D$.
For any element $X$ of $\g$ outside $\h$, $\g=\langle X \rangle \oplus \h$.
The bilinear form $\mu_D$ on $\g$ defined by $\mu_D(X,z)= D(z)$ and $\mu_D(y,z)=0$, for $y,z\in\h$, is a 2-cocycle for $\mu$
and a Lie bracket.
Hence,
\begin{equation}\label{eqn:linear-def}
 \mu_t=\mu + t\mu_D,
\end{equation}
is a linear deformation of $\mu$.
If $\g$ is nilpotent, then $\mu_t$ is always solvable but not nilpotent. 
In particular, $\mu_t$ is not isomorphic to $\mu$ for all $t\in\C^\times$.
The construction described above can be carried out also for any derivation $D$, not necessarily semisimple.
However, one can not assure that $\mu_t$ is not isomorphic to $\mu$ in this case.

\subsection{Degenerations from deformations}

Under certain hypothesis on the derivation $D$, the deformation constructed above does correspond to a degeneration.

\begin{proposition}\label{prop:restriction}
 Let $\n$ be a nilpotent Lie algebra with an ideal $\h$ of codimension 1 admitting a nontrivial semisimple derivation $D$.
 If $D$ is the restriction of a semisimple derivation $\tilde D$ of $\n$ 
 such that it is nontrivial on a direct invariant complement of $\h$,
 then $\n$ is the degeneration of another, non isomorphic, Lie algebra.
 Moreover, the degeneration can be realized by a 1-PSG.
\end{proposition}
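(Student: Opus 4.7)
The plan is to use $\tilde D$ to build a 1-PSG $g_t$ realizing the linear deformation $\mu_t=\mu+t\mu_D$ of Section 2.1 as a degeneration. The key observation is that extending $D$ to a semisimple derivation $\tilde D$ of all of $\n$, nontrivial on a complement of $\h$, provides the one-parameter group of $\n$-automorphisms needed to satisfy equation \eqref{eqn:degeneration}.

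I first pick an eigenvector $X\in V$ of $\tilde D$ in the $1$-dimensional invariant complement, with $\tilde D(X)=\lambda X$ and $\lambda\neq 0$ (by the hypothesis that $\tilde D$ is nontrivial on $V$). Using this $X$ and $D=\tilde D|_\h$, I build $\mu_D$ and $\mu_t=\mu+t\mu_D$ as in Section 2.1. By the remark there, each $\mu_t$ with $t\neq 0$ is solvable but not nilpotent, hence not isomorphic to $\mu$.

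Next, after replacing $\tilde D$ by a suitable rational rescaling, by one of its commuting summands coming from a $\mathbb{Q}$-basis of the span of its eigenvalues, or by $-\tilde D$ if needed, I may assume $\tilde D$ has integer eigenvalues with $\tilde D(X)=mX$ for a positive integer $m$. Then $g_t:=t^{\tilde D}$ is a genuine 1-PSG, and since $\tilde D\in\Der(\n)$, each $g_t$ is an automorphism of $(\n,\mu)$; in particular $g_t(X)=t^m X$, $g_t(\h)=\h$, and $g_t|_\h$ commutes with $D$.

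The core computation is the identity $g_t^{-1}\cdot\mu_1=\mu_{t^m}$, i.e.\ $\mu_1(g_t x,g_t y)=g_t(\mu_{t^m}(x,y))$. For $y,z\in\h$, both sides reduce to $g_t(\mu(y,z))$ since $\mu_D$ vanishes on $\h\times\h$. For $X$ and $y\in\h$, using that $g_t$ is an automorphism of $(\n,\mu)$ and $Dg_t y=g_t Dy$,
\[
\mu_1(g_t X,g_t y)=\mu(g_t X,g_t y)+t^m D(g_t y)=g_t(\mu(X,y))+t^m g_t(Dy)=g_t(\mu_{t^m}(X,y)).
\]
Hence $g_t^{-1}\cdot\mu_1=\mu_{t^m}\to\mu$ as $t\to 0$, exhibiting $\mu_1\rightarrow_{\deg}\mu$ via the 1-PSG $g_t^{-1}$. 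The main obstacle is the bookkeeping around rescaling $\tilde D$ and the reparametrization $s=t^m$; once these are settled, the heart of the proof is that $g_t$ is an automorphism of \emph{all} of $\n$, not just of $\h$, which is precisely where the hypothesis that $D$ extends to a semisimple derivation of $\n$ is used, and is exactly what makes the mixed bracket transform correctly.
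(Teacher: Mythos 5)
Your proof is correct and follows essentially the same route as the paper: both take $g_t=t^{\tilde D}$ (the paper writes this out eigenspace by eigenspace after normalizing the eigenvalue on $X$ to $1$) and verify the identity $g_t^{-1}\cdot\mu_1=\mu_t$ up to the harmless reparametrization $t\mapsto t^m$. Your extra care in reducing to integer eigenvalues so that $g_t$ is literally an algebraic 1-PSG addresses a point the paper glosses over, but it does not change the substance of the argument.
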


\begin{proof}
Let $\n=(\n,\mu)$.
Let $X$ be an eigenvector of $\tilde D$ complementary to $\h$ and let $\lambda_0\ne 0$ be its eigenvalue.
We may assume that $\lambda_0=1$ (by considering $\tilde D/\lambda_0$ and $D/\lambda_0$ instead of $\tilde D$ and $D$).

Let $\lambda_1,\dots,\lambda_k$ be the different eigenvalues of $D$ and let $\h=\h_{\lambda_1}\oplus\dots\oplus \h_{\lambda_k}$
be the corresponding graded decomposition of $\h$, 
that is $\mu(\h_{\lambda_i},\h_{\lambda_j})\subseteq \h_{\lambda_i+\lambda_j}$.

Hence,
\[ \n=(\langle X\rangle \oplus \h,\mu) \]
where both summands of $\n$ are $\tilde D$-invariant and $\mu(X,\h_{\lambda_j})\subseteq \h_{1+\lambda_j}$.

Let $\mu_t=\mu+t\mu_D$ be the linear deformation constructed as in \eqref{eqn:linear-def}, which is given by
\begin{align*}
\mu_{t}(X,y_{j}) &= \mu(X,y_j)+t \lambda_j y_j,\quad \text{if $y_j\in \h_{\lambda_j}$, for $1\le j \le k$}. \\
\mu_{t}(y_i,y_j) &= \mu(y_i,y_j),\quad \text{if $y_i\in\h_{\lambda_i}$ and $y_j\in\h_{\lambda_j}$, for $1\le i,j\le k$}.
\end{align*}
 
Let $g_t\in GL_n$, where $n=\dim \n$, be defined by
\[ g_t|_{\langle X \rangle}=t I  \qquad\text{and}\qquad g_t|_{\h_{\lambda_i}}=t^{\lambda_i} I, \quad\text{for $i=1\dots k$}.  \] 
It is not difficult to check that \eqref{eqn:degeneration} is satisfied.
In fact, if $y_i\in\h_{\lambda_i}$ and $y_j\in\h_{\lambda_j}$ for $1\le i,j\le k$, then  
\begin{eqnarray*}
 g_t(\mu_{t}(X,y_j))       &=& g_t(\mu(X,y_j)+\lambda_j ty_j)=t^{1+\lambda_j}\mu (X,y_j)+\lambda_jt^{\lambda_j+1}y_j, \\
 \mu_{1}(g_t(X),g_t(y_j)) &=& \mu_{1}(t X,t^{\lambda_j}y_j)=t^{1+\lambda_j}\mu(X,y_j)+\lambda_jt^{\lambda_j+1}y_j,\\ 
\end{eqnarray*}
and 
\begin{eqnarray*}
 g_t(\mu_{t}(y_i,y_j)) &=& g_t(\mu(y_i,y_j))=t^{\lambda_i+\lambda_j}\mu (y_i,y_j), \\
 \mu_{1}(g_t(y_i),g_t(y_j)) &=& \mu_{1}(t^{\lambda_i}y_i,t^{\lambda_j}y_j)=t^{\lambda_i+\lambda_j}\mu(y_i,y_j). \\ 
\end{eqnarray*}
Therefore, being $\mu_1$ solvable,  $\mu$ is the degeneration of another, non isomorphic, Lie algebra.
\end{proof}

In the above proposition the ideal $\h$ is given, but clearly any such ideal will work.
Hence, if $\tilde D$ is a derivation of $\n$ that preserves an ideal $\h$ and such that
its restriction to $\h$ is semisimple, we get for $\n$ the same conclusion of Proposition \ref{prop:restriction}.
This is the statement in Theorem \ref{thm:1}.

\begin{proof}[Proof of Theorem \ref{thm:1}]
 The semisimple derivation $D$ of $\n$ preserves the (characteristic) ideal $[\n,\n]$.
 Let $V$ be a $D$-invariant complement of $[\n,\n]$ and let $\{X_1,\dots,X_r\}$ be a basis of $V$ formed by eigenvectors of $D$.
 Since $V$ generates $\n$ as a Lie algebra (see for instance \cite{J}, page 29) and $D$ is nontrivial, $D$ is nontrivial on $V$ 
 and we may assume that $X_1$ is an eigenvector with nonzero eigenvalue. 
 Now let $\h=\langle X_2,\dots,X_r \rangle \oplus [\n,\n]$.
 Clearly $\h$ is an ideal of $\n$ of codimension 1, $D$ preserves $\h$, $D|\h$ is semisimple 
 and $D$ is nontrivial on $X_1$.
 Therefore, by Proposition \ref{prop:restriction}, $\n$ is the degeneration of a Lie algebra non isomorphic to $\n$.
\end{proof}

\section{The conjecture in dimension 7}

All nilpotent Lie algebras of dimension $<7$ have semisimple derivations.
Therefore the Grunewald-O'Halloran conjecture holds in this case.

Moreover, in dimensions 2, 3, 4, 5 and 6 all nilpotent Lie algebras (finite number of isomorphism classes)
are the degeneration of a single one \cite{GO1,Se}.
Hence, an algebra degenerating to it degenerates to all the others as well.

By considering different linear deformations, we found that each nilpotent Lie algebra of dimension $<7$
is the degeneration of many others, non isomorphic, Lie algebras.
Many of those degenerations can be realized by a 1-PSG, but others can not.

\begin{example}
The 6-dimensional nilpotent Lie algebra $12346_E$ in \cite{Se}, that we rename $\mu$, defined by
\begin{align}\label{ec}
 \mu(e_1,e_2)&=e_3, & \quad \mu(e_1,e_3)&=e_4, & \quad \mu(e_1,e_4)&=e_5, \\
 \mu(e_2,e_3)&=e_5, & \quad \mu(e_2,e_5)&=e_6, & \quad \mu(e_3,e_4)&=-e_6,\nonumber
\end{align}
degenerates to all other nilpotent Lie algebras of dimension 6 \cite{Se}.

We now construct a solvable linear deformation of $\mu$ that degenerates to it, and therefore
to all other 6-dimensional nilpotent Lie algebras.
To this end consider the ideal $\h=\langle e_2,e_3,e_4,e_5,e_6 \rangle$ and the derivation $D$ of $\h$ defined by
\[ D(e_2)=e_2, \quad D(e_4)=2e_4, \quad D(e_5)=e_5, \quad D(e_6)=2e_6. \]
This produces the 2-cocycle $\mu_{D}$, defined by
\[
 \mu_{D}(e_1,e_2)=e_2, \quad \mu_{D}(e_1,e_4)=2e_5, \quad \mu_{D}(e_1,e_5)=e_5, \quad \mu_{D}(e_1,e_6)=2e_6.
\]
The corresponding deformation of $\mu$, $\mu_t=\mu+t\mu_{D}$, is then given by
\begin{align*}
 \mu_{t}(e_1,e_2)&=e_3+te_2, \quad & \mu_{t}(e_1,e_3)&=e_4, \quad & \mu_{t}(e_1,e_4)&= e_5+2te_4,\\
 \mu_{t}(e_1,e_5)&=te_5, \quad & \mu_{t}(e_1,e_6)&=2te_6,  \quad & \mu_{t}(e_2,e_3)&=e_5,\\
 \mu_{t}(e_2,e_5)&=e_6, \quad & \mu_{t}(e_3,e_4)&=-e_6,
\end{align*}
and in particular $\mu_{1}$ is given by
 \begin{align*}
 \mu_{1}(e_1,e_2)&=e_3+e_2, \quad & \mu_{1}(e_1,e_3)&=e_4, \quad & \mu_{1}(e_1,e_4)&=e_5+2e_4,\\ 
 \mu_{1}(e_1,e_5)&=e_5, \quad & \mu_{1}(e_1,e_6)&=2e_6, \quad & \mu_{1}(e_2,e_3)&=e_5,\\
 \mu_{1}(e_2,e_5)&=e_6, \quad & \mu_{1}(e_3,e_4)&=-e_6.
\end{align*} 
Let $g_t\in GL_6$ be the 1-PSG given by
\[ 
 g_t=\begin{pmatrix} t \\ & t^2 \\ & & t^3 \\ & & & t^4 \\ & & & & t^5 \\ & & & & & t^7 \end{pmatrix}. 
\]
It is easy to verify that, for all $t\ne 0$, $g_t^{-1}\cdot \mu_{1} =\mu_{t}$ and thus $\mu_1\rightarrow_{\deg} \mu$.  

\end{example}


The variety of nilpotent Lie algebras of dimension 7 has two irreducible components,
each of which is the closure of the orbits of two families $\mu_\alpha^1$ and $\mu_\alpha^2$, with $\alpha\in\C$ \cite[Main Theorem]{AG}.
The first family is made of nilpotent Lie algebras of rank $\ge 1$, while the second family is made entirely of
characteristically nilpotent algebras.

By Theorem \ref{thm:1} and being degeneration transitive, to prove Theorem \ref{thm:2} it suffices to find
for each algebra in the second family another non isomorphic Lie algebra degenerating to it.
All algebras $\mu_\alpha^2$ are indecomposable, because in dimension 7 all the decomposables are of rank $\ge 1$.
In what follows we refer to the classification by Magnin \cite{M}.
Here the (indecomposable) characteristically nilpotent Lie algebras
are given as a continuous family and seven isolated algebras: 
\[ \g_{7,0.1}\quad \g_{7,0.2}\quad \g_{7,0.3}\quad \g_{7,0.4(\lambda)}\quad \g_{7,0.5}\quad \g_{7,0.6}\quad \g_{7,0.7}\quad \g_{7,0.8} \]
Without identifying the algebras $\mu_\alpha^2$ within this classification, Theorem \ref{thm:2} follows if we are able to construct for each of 
these another Lie algebra degenerating to it.
Notice that by doing this, we are exhibiting for each 7-dimensional characteristically nilpotent Lie algebra, another one degenerating non-trivially to it,
something that we find interesting in itself.

\begin{proof}[Proof of Theorem \ref{thm:2}]
We start by considering the family $\g_{7,0.4(\lambda)}$, which is defined by
\begin{align*}
 \mu(e_{1},e_{2})&=e_{3}, \quad \mu(e_{1},e_{3})=e_{4}, \quad \mu(e_{1},e_{4})=e_{6}+\lambda e_{7}, \\
 \mu(e_{1},e_{5})&=e_{7}, \quad \mu(e_{1},e_{6})=e_{7}, \quad \mu(e_{2},e_{3})=e_{5}, \\
 \mu(e_{2},e_{4})&=e_{7}, \quad \mu(e_{2},e_{5})=e_{6}, \quad \mu(e_{3},e_{5})=e_{7}.
\end{align*}
Take the ideal $\h=\langle e_{2},e_{3},e_{4},e_{5},e_{6},e_{7}\rangle$ and $D\in\Der(\h)$ defined by
\[ D(e_{2})=e_{2}\quad D(e_{5})=e_{5}\quad D(e_{6})=2e_{6},\quad D(e_{7})=e_{7}. \]
The corresponding 2-cocycle $\mu_{D}$ is given by 
\[ \mu_{D}(e_{1},e_{2})=e_{2},\quad \mu_{D}(e_{1},e_{5})=e_{5},\quad \mu_{D}(e_{1},e_{6})=2e_{6},\quad \mu_{D}(e_{1},e_{7})=e_{7}, \]
and the corresponding deformation $\mu_{t}=\mu + t\mu_{D}$ of $\mu$ is given by
\begin{align*}
 \mu_{t}(e_{1},e_{2})&=e_{3}+te_{2}, \quad & \mu_{t}(e_{1},e_{3})&=e_{4}, \quad & \mu_{t}(e_{1},e_{4})&=e_{6}+\lambda e_{7}, \\
 \mu_{t}(e_{1},e_{5})&=e_{7}+te_{5}, \quad & \mu_{t}(e_{1},e_{6})&=e_{7}+2te_{6}, & \quad \mu_{t}(e_{1},e_{7})&=te_{7}, \\
 \mu_{t}(e_{2},e_{3})&=e_{5}, \quad & \mu_{t}(e_{2},e_{4})&=e_{7}, \quad & \mu_{t}(e_{2},e_{5})&=e_{6},\\
 \mu_{t}(e_{3},e_{5})&=e_{7}.
\end{align*}
Consider now $g_t=g_{t}(\lambda)\in GL_7$ given by
\[g_t=\left(\begin{smallmatrix}
     t & 0 & 0 & 0 & 0 & 0 & 0\\[0.1cm] 
     0 & 1 & 0 & 0 & 0 & 0 & 0\\[0.1cm]
     0 & 0 & t & 0 & 0 & 0 & 0\\[0.1cm]
     0 & 0 & 0 & t^{2} & 0 & 0 & 0\\[0.1cm]
     \frac{1}{4}\left(\frac{t^{2}-1}{t}\right) & \left(1-\lambda+\frac{\lambda}{t}-\frac{1}{t^{2}}\right) & 0 & 0 & t & 0 & 0\\[0.1cm]
     0 & 0 & \frac{1}{4}\left(\frac{1-t^{2}}{t}\right) & \frac{1}{2}\left(1-t^{2}\right) & 0 & t & 0\\[0.1cm]
     0 & 0 & \left(t-\lambda t+\lambda-\frac{1}{t}\right) & \left(\frac{1}{2}t^{2}-\lambda t^{2}+\lambda t-\frac{1}{2}\right)& \left(\lambda t-t-\lambda + \frac{1}{t}\right) & 0 & t^{2}
     \end{smallmatrix}\right).
\]
The calculations below show that $g_t^{-1}\cdot\mu_1=\mu_t$ and thus $\mu_{1}\rightarrow_{deg}\mu$.

\begin{footnotesize}
\begin{flalign*}
\bullet \ g_{t}\mu_{t}(e_{1},e_{2})&=g_{t}(e_{3}+te_{2})&\\
                                   &=te_{3}+\frac{1}{4}\left(\frac{1-t^{2}}{t}\right)e_{6}+\left(t-\lambda t+\lambda-\frac{1}                   {t}\right)e_{7}+te_{2}+t\left(1-\lambda+\frac{\lambda}{t}-\frac{1}{t^{2}}\right)e_{5}&\\
                                   &=te_{2}+te_{3}+\left(t-\lambda t+\lambda-\frac{1}{t}\right)e_{5}+\frac{1}{4}\left(\frac{1-t^{2}}{t}\right)e_{6}+\left(t-\lambda t+\lambda-\frac{1}{t}\right)e_{7}&\\
\mu_{1}(g_{t}e_{1},g_{t}e_{2})&=\mu_{1}\left(te_{1}+\frac{1}{4}\left(\frac{t^{2}-1}{t}\right)e_{5},e_{2}+\left(1-\lambda+\frac{\lambda}{t}-\frac{1}{t^{2}}\right)e_{5}\right)&\\
                              &=t(e_{3}+e_{2})+t\left(1-\lambda+\frac{\lambda}{t}-\frac{1}{t^{2}}\right)(e_{7}+e_{5})-\frac{1}{4}\left(\frac{t^{2}-1}{t}\right)e_{6}&\\
                              &=te_{2}+te_{3}+\left(t-\lambda t+\lambda-\frac{1}{t}\right)e_{5}+\frac{1}{4}\left(\frac{1-t^{2}}{t}\right)e_{6}+\left(t-\lambda t+\lambda-\frac{1}{t}\right)e_{7}&
\end{flalign*}
\end{footnotesize}\\[-0.8cm]
\begin{footnotesize}
\begin{flalign*}
\bullet \ g_{t}\mu_{t}(e_{1},e_{3})&=g_{t}e_{4}&\\
                                   &=t^{2}e_{4}+\frac{1}{2}(1-t^{2})e_{6}+\left(\frac{1}{2}t^{2}-\lambda t^{2}+\lambda t-\frac{1}{2}\right)e_{7}&\\
\mu_{1}(g_{t}e_{1},g_{t}e_{3})&=\mu_{1}\left(te_{1}+\frac{1}{4}\left(\frac{t^{2}-1}{t}\right)e_{5},te_{3}+\frac{1}{4}\left(\frac{1-t^{2}}{t}\right)e_{6}+\left(t-\lambda t+\lambda-\frac{1}{t}\right)e_{7}\right)&\\
                              &=t^{2}e_{4}+\frac{1}{4}t\left(\frac{1-t^{2}}{t}\right)(e_{7}+2e_{6})+t\left(t-\lambda t+\lambda-\frac{1}{t}\right)e_{7}-\frac{1}{4}(t^{2}-1)e_{7}&\\
                              &=t^{2}e_{4}+\frac{1}{4}(1-t^{2})e_{6}+\left(\frac{1}{4}-\frac{1}{4}t^{2}+t^{2}-\lambda t^{2}+\lambda t-1-\frac{1}{4}t^{2}+\frac{1}{4} \right)e_{7}&\\
                              &=t^{2}e_{4}+\frac{1}{4}(1-t^{2})e_{6}+\left(\frac{1}{2}t^{2}-\lambda t^{2}+\lambda t-\frac{1}{2}\right)e_{7}&
\end{flalign*}
\end{footnotesize}\\[-0.8cm]
\begin{footnotesize}
\begin{flalign*}
\bullet \ g_{t}\mu_{t}(e_{1},e_{4})&=g_{t}(e_{6}+\lambda e_{7})&\\
                                   &=te_{6}+\lambda t^{2}e_{7}&\\
\mu_{1}(g_{t}e_{1},g_{t}e_{4})&=\mu_{1}\left(te_{1}+\frac{1}{4}\left(\frac{t^{2}-1}{t}\right)e_{5},t^{2}e_{4}+\frac{1}{2}(1-t^{2})e_{6}+\left(\frac{1}{2}t^{2}-\lambda t^{2}+\lambda t-\frac{1}{2}\right)e_{7}\right)&\\
                              &=t^{3}(e_{6}+\lambda e_{7})+\frac{1}{2}t(1-t^{2})(e_{7}+2e_{6})+t\left(\frac{1}{2}t^{2}-\lambda t^{2}+\lambda t-\frac{1}{2}\right)e_{7}&\\
                              &=(t^{3}+t-t^{3})e_{6}+\left(\lambda t^{3}+\frac{1}{2}t-\frac{1}{2}t^{3}+\frac{1}{2}t^{3}-\lambda t^{3}+\lambda t^{2}-\frac{1}{2}t\right)e_{7}&\\
                              &=te_{6}+\lambda t^{2}e_{7}&
\end{flalign*}
\end{footnotesize}\\[-0.8cm]
\begin{footnotesize}
\begin{flalign*}
\bullet \ g_{t}\mu_{t}(e_{1},e_{5})&=g_{t}(e_{7}+te_{5})&\\
                                   &=t^{2}e_{7}+t^{2}e_{5}+t\left(\lambda t-t-\lambda +\frac{1}{t}\right)e_{7}&\\  
                                   &=t^{2}e_{5}+(\lambda t^{2}-\lambda t+1)e_{7}&\\
\mu_{1}(g_{t}e_{1},g_{t}e_{5})&=\mu_{1}\left(te_{1}+\frac{1}{4}\left(\frac{t^{2}-1}{t}\right)e_{5},te_{5}+\left(\lambda t-t-\lambda+\frac{1}{t}\right)e_{7}\right)&\\
                              &=t^{2}(e_{7}+e_{5})+t\left(\lambda t-t-\lambda+\frac{1}{t}\right)e_{7}&\\
                              &=t^{2}e_{5}+(t^{2}+\lambda t^{2}-t^{2}-\lambda t+1)e_{7}&\\
                              &=t^{2}e_{5}+(\lambda t^{2}-\lambda t+1)e_{7}&          
\end{flalign*}
\end{footnotesize}\\[-0.8cm]
\begin{footnotesize}
\begin{flalign*}
\bullet \ g_{t}\mu_{t}(e_{1},e_{6})&=g_{t}(e_{7}+2te_{6})&\\
                                   &=t^{2}e_{7}+2t^{2}e_{6}&\\  
\mu_{1}(g_{t}e_{1},g_{t}e_{6})&=\mu_{1}\left(te_{1}+\frac{1}{4}\left(\frac{t^{2}-1}{t}\right)e_{5},te_{6}\right)&\\
                              &=t^{2}(e_{7}+2e_{6})&\\
                              &=t^{2}e_{7}+2t^{2}e_{6}&     
\end{flalign*}
\end{footnotesize}\\[-0.8cm]
\begin{footnotesize}
\begin{flalign*}
\bullet \ g_{t}\mu_{t}(e_{1},e_{7})&=g_{t}(te_{7})&\\
                                   &=t^{3}e_{7}&\\  
\mu_{1}(g_{t}e_{1},g_{t}e_{7})&=\mu_{1}\left(te_{1}+\frac{1}{4}\left(\frac{t^{2}-1}{t}\right)e_{5},t^{2}e_{7}\right)&\\
                              &=t^{3}e_{7}& 
\end{flalign*}
\end{footnotesize}\\[-0.8cm]
\begin{footnotesize}
\begin{flalign*}
\bullet \ g_{t}\mu_{t}(e_{2},e_{3})&=g_{t}e_{5}&\\
                                   &=te_{5}+\left(\lambda t-t-\lambda+\frac{1}{t}\right)e_{7}&\\  
\mu_{1}(g_{t}e_{2},g_{t}e_{3})&=\mu_{1}\left(e_{2}+\left(1-\lambda+\frac{\lambda}{t}-\frac{1}{t^{2}}\right)e_{5},te_{3}+\frac{1}{4}\left(\frac{1-t^{2}}{t}\right)e_{6}+\left(t-\lambda t+\lambda-\frac{1}{t}\right)e_{7}\right)&\\
                              &=te_{5}-t\left(1-\lambda +\frac{\lambda}{t}-\frac{1}{t^{2}}\right)e_{7}&\\
                              &=te_{5}+\left(\lambda t-t-\lambda +\frac{1}{t}\right)e_{7}&
\end{flalign*}
\end{footnotesize}\\[-0.8cm]
\begin{footnotesize}
\begin{flalign*}
\bullet \ g_{t}\mu_{t}(e_{2},e_{4})&=g_{t}e_{7}&\\
                                   &=t^{2}e_{7}&\\  
\mu_{1}(g_{t}e_{2},g_{t}e_{4})&=\mu_{1}\left(e_{2}+\left(1-\lambda+\frac{\lambda}{t}-\frac{1}{t^{2}}\right)e_{5},t^{2}e_{4}+\frac{1}{4}(1-t^{2})e_{6}+\left(\frac{1}{2}t^{2}-t+\lambda+\frac{1}{t}\right)e_{7}\right)&\\
                              &=t^{2}e_{7}&
\end{flalign*}
\end{footnotesize}\\[-0.8cm]
\begin{footnotesize}
\begin{flalign*}
\bullet \ g_{t}\mu_{t}(e_{2},e_{5})&=g_{t}e_{6}&\\
                                   &=te_{6}&\\  
\mu_{1}(g_{t}e_{2},g_{t}e_{5})&=\mu_{1}\left(e_{2}+\left(1-\lambda+\frac{\lambda}{t}-\frac{1}{t^{2}}\right)e_{5},te_{5}+\left(\lambda t-t-\lambda+\frac{1}{t}\right)e_{7}\right)&\\
                              &=te_{6}&
\end{flalign*}
\end{footnotesize}\\[-0.8cm]
\begin{footnotesize}
\begin{flalign*}
\bullet \ g_{t}\mu_{t}(e_{3},e_{5})&=g_{t}e_{7}&\\
                                   &=t^{2}e_{7}&\\  
\mu_{1}(g_{t}e_{3},g_{t}e_{5})&=\mu_{1}\left(te_{3}+\frac{1}{4}\left(\frac{1-t^{2}}{t}\right)e_{6}+\left(t-\lambda t+\lambda-\frac{1}{t}\right)e_{7},te_{5}+\left(\lambda t-t-\lambda+\frac{1}{t}\right)e_{7}\right)&\\
                              &=t^{2}e_{7}&
\end{flalign*}
\end{footnotesize}\\

For the remaining seven algebras $\mu$ we give, in the table below, the ideal $\h$ of codimension 1, the semisimple derivation $D\in Der(\h)$
that we choose to construct the linear deformation $\mu_t$, and the family $g_{t}\in GL_{7}$ satisfying
$g_t^{-1}\cdot\mu_1=\mu_t$, which is not difficult to check by hand.  
Therefore $\mu_1\rightarrow_{\deg}\mu$ and the proof is complete.
\end{proof}

\begin{center}
\begin{longtable}{|c|c|c|c|}\hline
\scriptsize{$\g$} & \scriptsize{$\h$} & \scriptsize{$D\in Der(\h)$} & \scriptsize{$g_{t}$}\\ \hline 
\rule[-1.5cm]{0cm}{3.2cm}\scriptsize{$\g_{7,0.1}$} & \scriptsize{$\langle e_{1},e_{3},e_{4},e_{5},e_{6},e_{7} \rangle $} & \scriptsize{$\left(\begin{smallmatrix}
1 &   &   &   &   &   \\
  & 3 &   &   &   &   \\
  &   & 4 &   &   &   \\
  &   &   & 5 &   &   \\
  &   &   &   & 6 &   \\
  &   &   &   &   & 7 \\
\end{smallmatrix}\right)$} & \scriptsize{$\left(\begin{smallmatrix} 
1 & 0 & 0 & 0 & 0 & 0 & 0\\[0.1cm]
0 & t & 0 & 0 & 0 & 0 & 0\\[0.1cm]
\frac{1}{2}\left(\frac{t-1}{t} \right) & 0 & 1 & 0 & 0 & 0 & 0\\[0.1cm]
0 & 0 & 0 & 1 & 0 & 0 & 0\\[0.1cm]
0 & \frac{1}{6}\left(\frac{3t^{2}-5t+2}{t} \right) & 0 & 0 & 1 & 0 & 0\\[0.1cm]
0 & 0 & \frac{1}{3}\left(\frac{1-t}{t} \right) & 0 & 0 & 1 & 0\\[0.1cm]
0 & 0 & 0 & \frac{1}{3}\left(\frac{1-t}{t} \right) & \frac{1}{2}\left(\frac{1-t}{t} \right) & 0 & 1\\
\end{smallmatrix}\right)$}\\ \hline
\rule[-1.6cm]{0cm}{3.4cm}\scriptsize{$\g_{7,0.2}$} & \scriptsize{$\langle e_{1},e_{3},e_{4},e_{5},e_{6},e_{7} \rangle $} & \scriptsize{$\left(\begin{smallmatrix}
1 &   &   &   &   &  \\
  & 0 &   &   &   &  \\
  &   & 1 &   &   &  \\
  &   &   & 2 &   &  \\
  &   &   &   & 3 &  \\
  &   &   &   &   & 4\\
\end{smallmatrix}\right)$} & \scriptsize{$\left(\begin{smallmatrix} 
1 & 0 & 0 & 0 & 0 & 0 & 0\\[0.1cm]
0 & t & 0 & 0 & 0 & 0 & 0\\[0.1cm]
0 & 0 & t & 0 & 0 & 0 & 0\\[0.1cm]
0 & \frac{1}{8}\left(\frac{4t-3t^{2}-1}{t} \right) & 0 & t & 0 & 0 & 0\\[0.1cm]
\frac{1}{8}\left(\frac{t^{2}-1}{t^{2}}\right) & 0 & \frac{1}{2}(1-t) & 0 & t & 0 & 0\\[0.1cm]
0 & 0 & 0 & \frac{1}{2}(1-t) & 0 & t & 0\\[0.1cm]
0 & 0 & \frac{1}{8}\left(\frac{1-t^{2}}{t}\right) & 0 & \frac{1}{2}(1-t) & 0 & t\\
\end{smallmatrix}\right)$}\\ \hline
\rule[-1.3cm]{0cm}{2.8cm}\scriptsize{$\g_{7,0.3}$} & \scriptsize{$\langle e_{1},e_{3},e_{4},e_{5},e_{6},e_{7} \rangle$} & \scriptsize{$\left(\begin{smallmatrix}
1 &   &   &   &   &  \\
  & 0 &   &   &   &  \\
  &   & 1 &   &   &  \\
  &   &   & 2 &   &  \\
  &   &   &   & 3 &  \\
  &   &   &   &   & 4\\
\end{smallmatrix}\right)$} & \scriptsize{$\left(\begin{smallmatrix}
1 & 0 & 0 & 0 & 0 & 0 & 0\\[0.1cm]
0 & t & 0 & 0 & 0 & 0 & 0\\[0.1cm]
0 & 0 & t & 0 & 0 & 0 & 0\\[0.1cm]
\frac{1}{4}\left(\frac{t-1}{t} \right) & 0 & 0 & t & 0 & 0 & 0\\[0.1cm]
0 & \frac{1}{3}(1-t) & 0 & 0 & t & 0 & 0\\[0.1cm]
0 & 0 & \frac{1}{3}(1-t) & 0 & 0 & t & 0\\[0.1cm]
0 & 0 & \frac{1}{4}(1-t) & \frac{1}{3}(1-t) & 0 & 0 & t\\
\end{smallmatrix}\right)$}\\ \hline
\rule[-1.7cm]{0cm}{3.6cm}\scriptsize{$\g_{7,0.5}$} & \scriptsize{$\langle e_{1},e_{3},e_{4},e_{5},e_{6},e_{7} \rangle$} & \scriptsize{$\left(\begin{smallmatrix}
1 &   &   &   &   &  \\
  & 0 &   &   &   &  \\
  &   & 1 &   &   &  \\
  &   &   & 3 &   &  \\
  &   &   &   & 2 &  \\
  &   &   &   &-1 & 3\\
\end{smallmatrix}\right)$} & \scriptsize{$\left(\begin{smallmatrix}
1 & 0 & 0 & 0 & 0 & 0 & 0\\[0.1cm]
0 & t & 0 & 0 & 0 & 0 & 0\\[0.1cm]
0 & 0 & t & 0 & 0 & 0 & 0\\[0.1cm]
0 & \frac{1}{3}\left(\frac{t^{2}-1}{t} \right) & 0 & t & 0 & 0 & 0\\[0.1cm]
\frac{1}{6}\left(\frac{t^{2}-1}{t^{2}} \right) & 0 & \frac{1}{3}\left(\frac{1-t^{2}}{t} \right) & 0 & 1 & 0 & 0\\[0.1cm]
0 & 0 & \frac{1}{6}\left(\frac{t^{2}-1}{t} \right) & 0 & 0 & t & 0\\[0.1cm]
0 & 0 & \frac{1}{3}\left(\frac{t^{2}-1}{t} \right) & 0 & \frac{5}{6}(t^{2}-1) & 0 & t\\
\end{smallmatrix}\right)$}\\ \hline
\rule[-1.5cm]{0cm}{3.2cm}\scriptsize{$\g_{7,0.6}$} & \scriptsize{$\langle e_{2},e_{3},e_{4},e_{5},e_{6},e_{7} \rangle$} & \scriptsize{$\left(\begin{smallmatrix}
1 &   &   &   &   &  \\
  & 0 &   &   &   &  \\
  &   & 2 &   &   &  \\
  &   &   & 1 &   &  \\
  &   &   &   & 3 &  \\
  &   &   &   &   & 2\\
\end{smallmatrix}\right)$} & \scriptsize{$\left(\begin{smallmatrix}
t & 0 & 0 & 0 & 0 & 0 & 0\\[0.1cm]
0 & 1 & 0 & 0 & 0 & 0 & 0\\[0.1cm]
0 & 0 & t & 0 & 0 & 0 & 0\\[0.1cm]
0 & \frac{1}{2}\left(\frac{1-t^{2}}{t^{2}} \right) & \frac{1}{2}\left(\frac{1-t^{2}}{t} \right) & 1 & 0 & 0 & 0\\[0.1cm]
0 & 0 & 0 & 0 & t & 0 & 0\\[0.1cm]
0 & 0 & 0 & 0 & \frac{1}{2}\left(\frac{1-t^{2}}{t} \right) & 1 & 0\\[0.1cm]
0 & 0 & \frac{1}{2}\left(\frac{1-t^{2}}{t} \right) & \frac{3}{2}(1-t^{2}) & \frac{1}{2}\left(\frac{t^{2}-1}{t} \right) & 0 & t\\
\end{smallmatrix}\right)$}\\ \hline
\rule[-1.1cm]{0cm}{2.4cm}\scriptsize{$\g_{7,0.7}$} & \scriptsize{$\langle e_{2},e_{3},e_{4},e_{5},e_{6},e_{7} \rangle$} & \scriptsize{$\left(\begin{smallmatrix}
1 &   &   &   &   &  \\
  & 0 &   &   &   &  \\
  &   & 0 &   &   &  \\
  &   &   & 1 &   &  \\
  &   &   &   & 2 &  \\
  &   &   &   &   & 1\\
\end{smallmatrix}\right)$} & \scriptsize{$\left(\begin{smallmatrix}
t & 0 & 0 & 0 & 0 & 0 & 0\\[0.1cm]
0 & 1 & 0 & 0 & 0 & 0 & 0\\[0.1cm]
0 & 0 & t & 0 & 0 & 0 & 0\\[0.1cm]
(t-1) & 0 & 0 & t^{2} & 0 & 0 & 0\\[0.1cm]
0 & 0 & 0 & 0 & t & 0 & 0\\[0.1cm]
0 & 0 & 0 & 0 & 0 & t & 0\\[0.1cm]
0 & 0 & (1-t) & (1-t)t & 0 & 0 & t^{2}\\
\end{smallmatrix}\right)$}\\ \hline 
\rule[-1.2cm]{0cm}{2.6cm}\scriptsize{$\g_{7,0.8}$} & \scriptsize{$\langle e_{2},e_{3},e_{4},e_{5},e_{6},e_{7} \rangle$} & \scriptsize{$\left(\begin{smallmatrix}
1 &   &   &   &   &  \\
  & 0 &   &   &   &  \\
  &   & 0 &   &   &  \\
  &   &   & 2 &   &  \\
  &   &   &   & 1 &  \\
  &   &   &   &   & 2\\
\end{smallmatrix}\right)$} & \scriptsize{$\left(\begin{smallmatrix}
t & 0 & 0 & 0 & 0 & 0 & 0\\[0.1cm]
0 & 1 & 0 & 0 & 0 & 0 & 0\\[0.1cm]
0 & (1-t^{2}) & t^{3} & t(t^{2}-1) & 0 & 0 & 0\\[0.1cm]
0 & 0 & 0 & t & 0 & 0 & 0\\[0.1cm]
0 & 0 & 0 & 0 & t^{2} & 0 & 0\\[0.1cm]
0 & 0 & 0 & 0 & 0 & t^{3} & 0\\[0.1cm]
0 & 0 & \frac{1}{2}t^{2}(1-t) & t(1-t^{2}) & t^{2}(1-t^{2}) & 0 & t^{3}\\
\end{smallmatrix}\right)$}\\ \hline
\end{longtable}
\end{center}

\noindent{\bf Acknowledgements.}
This paper is part of the PhD.\ thesis of the first author. 
He thanks CONICET for the Ph.D.\ fellowship awarded that made this possible.
We thank Oscar Brega, Leandro Cagliero and Edison Fern\'andez-Culma for
their comments that helped us improved the presentation of this paper.


\end{document}